\newcommand{\NOTESON}{0}
\newcommand{\Debug}{0}
\newcommand{\COLORON}{0}
\newcommand{\comment}[1]{}
\newcommand{\COMMENT}[1]{}
\newcommand{\defi}[1]{{\color{Green}\emph{#1}}}
\newcommand{\acknowledgement}{\section*{Acknowledgement}}
\newtheorem{proposition}{Proposition}[section]
\newtheorem{definition}[proposition]{Definition}
\newtheorem{theorem}[proposition]{Theorem}
\newtheorem{corollary}[proposition]{Corollary}
\newtheorem{lemma}[proposition]{Lemma}
\newtheorem*{ufpConjecture}{{\color{red}Unfriendly Partition Conjecture}}
\newtheorem{examp}{Example}[section]
\newcommand{\FIG}{0}
\newcommand{\note}[1]{ 

	\ 

	{\color{Blue} NOTE: \color{Turquoise}{\small  \tt \begin{minipage}[c]{0.8\textwidth}  #1 \end{minipage} \ignorespacesafterend }} 
	
	\ 
	
	}
\else \newcommand{\note}[1]{} \fi
\renewcommand{\color}[1]{}
\newcommand{\cc}{\mathcal C}
\newcommand{\cf}{\mathcal F}
\newcommand{\cj}{\mathcal J}
\newcommand{\cu}{\mathcal U}
\newcommand{\cx}{\mathcal X}
\newcommand{\cy}{\mathcal Y}
\newcommand{\cz}{\mathcal Z}
\newcommand{\cw}{\mathcal W}
\newcommand{\oo}{\ensuremath{\omega}}
\newcommand{\alp}{\ensuremath{\alpha}}
\newcommand{\bet}{\ensuremath{\beta}}
\newcommand{\gam}{\ensuremath{\gamma}}
\newcommand{\del}{\ensuremath{\delta}}
\newcommand{\kap}{\ensuremath{\kappa}}
\newcommand{\lam}{\ensuremath{\lambda}}
\newcommand{\ccc}{\ensuremath{\mathcal C}}
\newcommand{\sm}{\backslash}
\newcommand{\restr}{\upharpoonright}
\newcommand{\sgl}[1]{\ensuremath{\{#1\}}}
\newcommand{\oseq}[2]{\ensuremath{(#1_\beta)_{\beta< #2}}} 
\newcommand{\ofml}[2]{\ensuremath{\{#1_\beta\}_{\beta< #2}}} 
\newcommand{\g}{\ensuremath{G\ }}
\newcommand{\G}{\ensuremath{G}}
\newcommand{\Lr}[1]{Lemma~\ref{#1}}
\newcommand{\Tr}[1]{Theorem~\ref{#1}}
\newcommand{\Sr}[1]{Section~\ref{#1}}
\newcommand{\Prr}[1]{Proposition~\ref{#1}}
\newcommand{\lfg}{locally finite graph}
\renewcommand{\iff}{if and only if}
\newcommand{\fe}{for every}
\newcommand{\st}{such that}
\newcommand{\ti}{there is}
\newcommand{\labtequ}[2]{ \begin{equation} \label{#1} 	\begin{minipage}[c]{0.9\textwidth}  #2 \end{minipage} \ignorespacesafterend \end{equation} } 
\newcommand{\mySection}[2]{}
\newcommand{\ufp}{unfriendly partition}
\newcommand{\smp}{strongly maximal partition}
\newcommand{\prfr}{pre-partitionable}
\newcommand{\opp}{opponent}
\newcommand{\opps}{opponents}
\newcommand{\klasse}{finitely closed}
\newcommand{\clx}[1]{\ensuremath{\overline {#1}}}
\newcommand{\clu}{\clx{\cu}}
\newcommand{\vinf}{\ensuremath{V_{\infty}}}
\newcommand{\rk}{r}
\title{Every rayless graph has an unfriendly partition}
\author{Henning Bruhn \qquad Reinhard Diestel\\[3pt] ${}^*$Agelos Georgakopoulos$\,$\thanks{Supported by a GIF grant.}\\[3pt] Philipp Spr\"ussel 
\bigskip \\
  {Mathematisches Seminar}\\
  {Universit\"at Hamburg}\\
  {\small Bundesstr.\ 55, 20146, Germany}
}
\date{\small 12 December, 2009}
\begin{document}
\maketitle

\begin{abstract}
We prove that every rayless graph has an unfriendly partition.
\end{abstract}

\section{Introduction}

A bipartition of the vertex set of a graph is \defi{unfriendly} if every vertex has at least as many 
neighbours in the other class as in its own. The following conjecture is one of the best-known open problems in infinite graph theory. 

\begin{ufpConjecture}\label{ufpc}
Every countable graph admits an unfriendly partition of its vertex set.
\end{ufpConjecture} 

Clearly, every finite graph 
has an unfriendly partition: just take any bipartition that maximizes the 
number of edges between the partition classes. For infinite graphs however,  few results are known. Shelah and Milner \cite{MilShel} constructed uncountable graphs that admit no unfriendly partitions, thereby disproving the original folklore conjecture that every graph has an \ufp. (They attribute this conjecture to R.~Cowan and W.~Emerson, unpublished.) It is an easy exercise in compactness to deduce from the finite theorem that every \lfg\ has an \ufp, see \cite{diestelBook05}. Aharoni, Milner and Prikry \cite{AhMiUnf} strengthened this fact by proving that every graph with only finitely many vertices of infinite degree has an \ufp.

At the other end of the spectrum, it is easy to prove that countable graphs in which only finitely many vertices have finite degree have \ufp s. Since the counterexamples from~\cite{MilShel} have no vertices of finite degree, the countability assumption here cannot be dropped.

The main aim of this paper is to prove that all rayless graphs, countable or not, have \ufp s:

\begin{theorem}\label{rayless}
Every rayless graph has an unfriendly partition.
\end{theorem}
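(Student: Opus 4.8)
The plan is to induct transfinitely on the rank of a rayless graph in the sense of Schmidt: every rayless graph carries an ordinal rank, where a graph has rank $0$ iff it is finite, and rank at most $\alpha$ iff it contains a finite set $S$ of vertices such that every component of $G-S$ has rank less than $\alpha$. Since a graph is rayless iff each of its components is, and since the union of unfriendly partitions of the components is an unfriendly partition of the whole graph (no edge runs between components), I may assume $G$ is connected. The base case is immediate: a finite graph has an unfriendly partition, for instance any bipartition maximizing the number of crossing edges, as noted in the introduction.

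For the inductive step I fix a finite separator $S\subseteq V(G)$ witnessing the rank, so that every component $C$ of $G-S$ is rayless of strictly smaller rank and hence, by the induction hypothesis, has an unfriendly partition. The whole difficulty is to glue these partitions together with a bipartition of $S$ so that (a) a vertex of $C$ adjacent to $S$ stays happy once its $S$-neighbours are counted, and (b) every vertex of $S$ is itself happy. Obstacle (a) arises because a vertex $u\in C$ that is exactly balanced inside $C$ becomes unhappy if an $S$-neighbour lands on its own side. I would neutralize this by \emph{strengthening} the inducted statement: instead of merely ``has an unfriendly partition'', prove that for every assignment of a finite boundary demand to the vertices (a pair of nonnegative integers recording already-committed neighbours on each side) there is a partition that is unfriendly once these external neighbours are counted. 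The boundary demands imposed on a component are then exactly the contributions of the finitely many $S$-vertices under a chosen $2$-colouring of $S$, and there are only $2^{|S|}$ such colourings to consider.

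Finite-degree vertices can be dispatched cleanly. A \emph{strongly maximal} partition in the sense of Aharoni, Milner and Prikry \cite{AhMiUnf} automatically makes every finite-degree vertex happy, since otherwise flipping such a vertex alone would be an improving finite move. Working throughout with strongly maximal boundary-partitions of the components, and using the Compactness Theorem to glue locally consistent choices across the (possibly infinitely many) components into one global partition, reduces the entire problem to controlling the finite set $S$.

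The main obstacle is condition (b): the vertices of $S$ of infinite degree. Such an $s$ has neighbours scattered over infinitely many components, so its happiness depends on all the component partitions at once, while those partitions were chosen in response to the colour of $s$ -- a circular dependence. This is exactly where raylessness is indispensable, for the Shelah--Milner counterexamples \cite{MilShel} show that no purely compactness-based gluing can succeed in general. My plan is to exploit that $S$ is finite: start from an arbitrary colouring of $S$, solve the components via the strengthened hypothesis, and if some infinite-degree $s\in S$ is left unhappy, recolour $s$ and re-solve, arguing that because only the finite set $S$ is ever recoloured the process can be driven to a simultaneously satisfying configuration -- for instance by selecting, among the $2^{|S|}$ colourings and the available strongly maximal component partitions, one that maximizes an appropriate finite potential on $S$, so that no infinite-degree vertex of $S$ can still be improved. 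Verifying that such a stable configuration exists and that fixing it does not reopen condition (a) is the delicate core of the argument; I expect it to force the introduction of an \emph{opponent} (an infinite-degree vertex witnessing unhappiness) and careful transfinite bookkeeping rather than a one-shot compactness argument.
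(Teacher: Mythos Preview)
Your framework is right: Schmidt's rank and transfinite induction on it, with a finite separator $S$ at each step, is exactly how the paper proceeds. The strengthening of the inductive hypothesis is also the correct instinct, though the paper uses a cleaner formulation: a graph is \emph{pre-partitionable} if every partition of every subset $U$ extends to one unfriendly for $V\setminus U$. This is stronger than your ``boundary demands'' version and is actually needed, because later repair steps fix not just the finitely many $S$-vertices but large (possibly infinite) sets of already-partitioned vertices.

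The genuine gap is precisely where you flag it: you have no mechanism for making the infinite-degree vertices of $S$ happy. Your proposal---try each of the $2^{|S|}$ colourings of $S$, solve the components accordingly, and select a configuration maximizing some ``finite potential'' on~$S$---cannot work as stated, because happiness of an infinite-degree $s\in S$ is not a finite quantity to be maximized; it requires $d(s)$ many opponents spread over infinitely many components, and there is no reason a locally-optimal choice of component partitions should deliver this simultaneously for all of~$S$. The paper's key idea, which you are missing, is \emph{not} to fix the colouring of $S$ in advance. Instead one runs a second transfinite induction on $\kappa=|\mathcal C|$, groups the components $\mathcal C\setminus\mathcal X_0$ into $\kappa$ finite blocks $X_\beta$, and for each $\beta$ builds a partition $\rho_\beta$ of $G[S\cup\bigcup\mathcal X_0\cup\bigcup X_\beta]$ that is unfriendly for \emph{everything including $S$}. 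Since $S$ is finite, pigeonhole yields $\kappa$ many $\rho_\beta$ agreeing on $S$ (and on a finer ``$\mathcal X_0$-stamp''); gluing these discovers the right colouring of $S$ automatically. Even then a set $F\subseteq S$ of vertices of degree exactly $\kappa$ may remain unhappy, and three further repair rounds (flip $F$; repartition the fewer-than-$\kappa$ affected components; repartition inside $\mathcal X_0$) are needed, each invoking the pre-partitionable hypothesis with a large fixed set. Your outline contains none of this machinery, and compactness alone cannot supply it since the happiness of $s\in S$ is not a finitary condition.
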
 

For the proof of this theorem we used a tool developed by Schmidt~\cite{schmidt83},
which assigns to every rayless graph an ordinal number,  its \defi{rank}.
This rank enables us to prove Theorem~\ref{rayless} by transfinite induction.

Closer inspection of the proof reveals that it does not
depend on the graphs being rayless, but only on the existence of such a kind of rank. Extending the method to more general rank functions, one can indeed strengthen Theorem~\ref{rayless} in various ways. One such strengthening obtained in Section~\ref{secLarge} is this:

\begin{theorem}\label{brush}
Every graph not containing a ray linked disjointly to infinitely many vertices of infinite degree has an \ufp.
\end{theorem}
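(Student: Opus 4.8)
The plan is to isolate what the proof of \Tr{rayless} really uses about Schmidt's rank and then to replace that rank by a coarser one tailored to the present hypothesis. Recall that Schmidt assigns rank $0$ to the finite graphs and declares $\rk(G)\le\alpha$ if there is a finite set $B\subseteq V(G)$ such that every component of $G-B$ has rank $<\alpha$; a graph is rayless exactly when it is ranked in this sense. For the present theorem I would instead declare $\rk(G)=0$ whenever $\vinf(G)$ is finite, i.e.\ whenever $G$ has only finitely many vertices of infinite degree, and keep the same recursive clause: $\rk(G)\le\alpha$ if some finite $B\subseteq V(G)$ leaves every component of $G-B$ of rank $<\alpha$. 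The argument then splits into two tasks: (A) show that every graph carrying such a rank has an \ufp, by re-running the transfinite induction of \Tr{rayless}; and (B) show that the graphs carrying such a rank are exactly those containing no ray linked disjointly to infinitely many vertices of infinite degree.

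For task (A) the point is that the induction in \Tr{rayless} never uses raylessness directly: it uses only the recursive clause of the rank together with a base case. Hence the same induction applies to the new rank once we supply the new base case, namely that graphs with only finitely many vertices of infinite degree have an \ufp. This is precisely the theorem of Aharoni, Milner and Prikry \cite{AhMiUnf}, so the base case is already available. The only thing to check is that whatever strengthened inductive statement the rayless proof carries through the induction (in order to control the finite separators $B$ and their finitely many neighbours) also holds for these base graphs; as that strengthening concerns only $B$ and its neighbourhood, this is a routine verification rather than a new idea.

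For task (B) the direction we need is that a graph with no such ray is ranked; equivalently, by contraposition, every unranked graph contains a ray linked disjointly to infinitely many vertices of infinite degree (call such a configuration a \emph{brush}). Here I would mimic Schmidt's construction of a ray in an unranked graph, but weave the teeth in as I go. An unranked graph has an unranked component, so we may assume $G$ connected; and removing a finite set from an unranked graph always leaves an unranked component, which in particular still has infinitely many vertices of infinite degree. Starting from the current frontier vertex of a finite trunk that is adjacent to an unranked connected piece $H$, I would apply the star--comb lemma to the infinite set $\vinf(H)$ inside $H$: a comb is already a brush and finishes the construction, while a star yields a centre $c$ together with infinitely many disjoint paths from $c$ to distinct vertices of infinite degree. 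I then route the trunk to $c$, harvest one of these paths as the next tooth, and continue the trunk into an unranked component of $H-c$. Iterating produces a ray together with infinitely many pairwise disjoint teeth ending in $\vinf$, that is, a brush.

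The main obstacle is the disjointness bookkeeping in task (B): the routing path to each star centre, the harvested tooth, and the component chosen for the continuation all live inside the same piece $H$, so at each step I must choose the continuing unranked component and the harvested tooth to avoid the finite structure already built. This is manageable because only finitely many of the infinitely many star teeth can meet the finite trunk, and because an unranked component survives the deletion of any finite set; the delicate point is to verify that a single unranked component of $H-c$ can always be selected disjoint from the finitely many routing and tooth vertices while still being adjacent to the trunk. (The converse half of (B), that ranked graphs contain no brush, is easy: a brush survives the deletion of any finite set inside a single component, so it would force an infinite descending sequence of ranks, and at rank $0$ it contradicts the finiteness of $\vinf$.)
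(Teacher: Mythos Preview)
Your proposal is correct and follows essentially the paper's route: the paper's \Tr{main} is exactly the abstraction you describe in task~(A), and \Tr{brush} is then obtained as Corollary~\ref{teeth} by taking the base class~$\mathcal U$ of graphs with only finitely many vertices of infinite degree, using \Lr{AMP} (Aharoni--Milner--Prikry) for pre-partitionability in the base case, and showing that brushless graphs lie in~$\overline{\mathcal U}$. The paper in fact detours through a larger base class~$\mathcal W$ before specialising, but that detour is irrelevant for \Tr{brush} itself.

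For task~(B) your star--comb argument works but is heavier than necessary: since any unranked connected piece~$H$ fails to lie in~$\mathcal U$, it already contains a vertex~$v$ of infinite degree, so one may simply take~$v$ as the next tooth, delete the finite path built so far, and continue in an unranked component of what remains---no star centres, no routing, and the disjointness bookkeeping becomes trivial. This is how the paper does it (see the proof of \Prr{ccw}, which it reuses verbatim for~$\mathcal U$).
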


\section{Definitions and statement of main result} \label{secDef}

Recall that
a \defi{cofinal} subset of an ordered set $A$ is a subset $B$  
such that for every $a\in A$ there is a $b \in B$ such that $a \leq b$. The \defi{cofinality} of an ordinal \alp\ is the smallest ordinal \del\ which is the order type of a cofinal subset of \alp. The cofinality of an ordinal is always a cardinal, but we will not need to make use of this fact.

For general graph-theoretic terminology we refer the reader to~\cite{diestelBook05}. Given a graph \g and a vertex $x\in V(G)$, we let $d_U(x)$ denote the number of neighbours of $x$ in a subset $U\subseteq V(G)$. For convenience, if $\cc$ is a subgraph of \G, or a set of subgraphs of \G, or a set of sets  of subgraphs of \G, we will simply write $d_\cc(x)$ ---to avoid more cumbersome notation of the form $d_{V(\bigcup \cc)} (x)$--- for the number of neighbours of $x$ (nested) in $\cc$.

A \defi{partition} of a set $V$ or a graph $G=(V,E)$ is a function $\pi: V \to \{0,1\}$. 
For a partition $\pi$ of $G$ and a vertex $x \in V(G)$, 
we say that a neighbour $y$ of $x$ is an \defi{\opp} (respectively, a \defi{friend}) of $x$ in $\pi$ if $\pi(y)\not=\pi(x)$ (resp.\ $\pi(y)=\pi(x)$). We write $a_\pi(x)$ for the number of \opps\ of $x$ in $\pi$. 
Moreover, if $U$ is a subset of $V(G)$ or a subgraph of $G$, then we write $a_\pi(x,U)$ for the number of \opps\ of $x$ in $\pi$ that lie in $U$. 
A vertex is \defi{happy} in a partition $\pi$ if 
it has at least as many opponents as it has friends. In particular, if a vertex $x$ has infinite degree then it 
is happy in $\pi$ if and only if $d(x)=a_\pi(x)$.
A partition is \defi{unfriendly} if every
vertex is happy in it, and it is \defi{unfriendly for} a vertex set $Y$ if every vertex in $Y$ is happy.

A graph \g is \defi{\prfr} if for every $U\subseteq V(G)$ and every partition $\pi$ of $U$ there is a partition $\pi'$ of \g extending $\pi$ so that every vertex in $V(G)\sm U$ is happy in $\pi'$. In particular, since we can choose $U$ to be the empty set, every \prfr\ graph has an unfriendly partition. Clearly, every finite graph is \prfr: given $U$ and $\pi$, choose $\pi'$ so as to maximize the number of edges incident with $V(G)\setminus U$ whose endvertices lie in different partition classes.

Schmidt~\cite{schmidt83} observed that it is possible to construct all rayless graphs by a recursive, transfinite procedure, starting with the class of finite graphs and then, in each step, glueing graphs constructed in previous steps along a common finite vertex set, to obtain new rayless graphs. We are going to generalise Schmidt's construction by replacing the class of finite graphs by larger classes. 

In order to do so, we call a class $\cu$ of graphs \defi{\klasse} if 
it is closed under taking finite disjoint unions and also
under adding finitely many new vertices and joining them arbitrarily to each other and to the old vertices. Starting with any \klasse\ class $\cu$ it is possible to recursively construct a larger class  $\overline{\cu}$ as follows.

\begin{definition}\label{def:clu}
Let $\cu$ be a \klasse\ class of graphs. 
For every ordinal $\mu$ we define recursively a class $\mathcal U(\mu)$:
\begin{itemize}
\item $\mathcal U(0):=\mathcal U$; and
\item if $\mathcal U(\lambda)$ is defined for every $\lambda<\mu$,
we include a graph $G$ in $\mathcal U(\mu)$ if it has a finite
vertex set $S$ such that for every component $C$ of $G-S$ there is an ordinal $\lambda<\mu$ such that $\mathcal U(\lambda)$ contains $C$.
\end{itemize}
The
\defi{closure $\overline{\mathcal U}$ of $\mathcal U$} is the union of the classes $\mathcal U(\lambda)$ for all ordinals \lam.
For a graph \g in $\overline{\mathcal U}$  we define its \defi{rank} to be the  smallest
ordinal $\rk_\mathcal U(G)=\rk(G)$ such that $\mathcal U(\rk(G))$ contains \G. 
\end{definition}

The prime example of a \klasse\ class is the class $\cf$ of finite graphs, and the rayless graphs are precisely those in~$\overline{\cf}$~\cite{schmidt83}. The locally finite graphs clearly do not form a \klasse\ class; but the graphs with only finitely many vertices of infinite degree do, and so do the graphs with only finitely many vertices of finite degree. Another class, which contains all these, will be defined in \Sr{secLarge}.

We note two properties of the above construction that are not hard to prove.

\begin{lemma}\label{S}
Let $G$ be a graph in $\overline{\mathcal U}$ for some \klasse\
class $\mathcal U$ of graphs, and let $S$ be a finite vertex
set so that $\rk(C)<\rk(G)$ for every component $C$ of $G-S$.
\begin{enumerate}[\rm (i)]
\item \label{ii} If $C_1,\ldots,C_n$ are components of $G-S$
then $\rk(G[S\cup\bigcup_{i=1}^n C_i])<\rk(G)$.
\item \label{i} If $S$ is  $\subseteq$-minimal with the property that $\rk(C)<\rk(G)$
for each component $C$ of $G-S$ then each vertex in $S$ has infinite degree.
\end{enumerate} 
\end{lemma}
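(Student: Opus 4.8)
The plan is to isolate two closure properties of the classes $\mathcal U(\mu)$ and then read both parts off a single computation. First I would check that \emph{every} class $\mathcal U(\mu)$ is again \klasse. For $\mu=0$ this holds by hypothesis, and for $\mu>0$ it is immediate from the definition: if $G_1,G_2\in\mathcal U(\mu)$ have witnessing finite sets $S_1,S_2$ (so every component of each $G_j-S_j$ lies in some $\mathcal U(\lambda)$ with $\lambda<\mu$), then $S_1\cup S_2$ witnesses $G_1\sqcup G_2\in\mathcal U(\mu)$, because the components of $(G_1\sqcup G_2)-(S_1\cup S_2)$ are exactly those of the $G_j-S_j$; and if $G'$ arises from $G\in\mathcal U(\mu)$ by adding a finite vertex set $W$ with arbitrary incidences, then $S\cup W$ witnesses $G'\in\mathcal U(\mu)$, since $G'-(S\cup W)=G-S$. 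Second, I would record monotonicity in the only form needed: if $C$ is \emph{connected} and $C\in\mathcal U(\lambda)$, then $C\in\mathcal U(\mu)$ for every $\mu\ge\lambda$. This is one line: for $\mu>\lambda$ the empty separator works, as the unique component of $C-\emptyset=C$ is $C$ itself, lying in $\mathcal U(\lambda)$ with $\lambda<\mu$, which is exactly the condition for $C\in\mathcal U(\mu)$.

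These two facts combine into the computation driving the whole lemma. Suppose $D_1,\dots,D_m$ are connected graphs in $\overline{\mathcal U}$, and put $\lambda^\ast:=\max_i\rk(D_i)$, which exists precisely because there are finitely many of them. By connected monotonicity each $D_i$ lies in $\mathcal U(\lambda^\ast)$; by closure under finite disjoint unions so does $\bigsqcup_i D_i$; and by closure under adding finitely many vertices, any graph obtained from $\bigsqcup_i D_i$ by adjoining a finite vertex set with arbitrary edges again lies in $\mathcal U(\lambda^\ast)$. Hence any such graph has rank at most $\lambda^\ast=\max_i\rk(D_i)$.

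For part~\eqref{ii} I would apply this with the $D_i$ taken to be $C_1,\dots,C_n$. Writing $H:=G[S\cup\bigcup_i V(C_i)]$, the key observation is that $H-S=\bigsqcup_i C_i$: distinct components of $G-S$ have no edges between them, so $H$ is exactly the disjoint union of the $C_i$ with the finite set $S$ adjoined. The computation above then gives $\rk(H)\le\max_i\rk(C_i)$, and since each $\rk(C_i)<\rk(G)$ and the maximum runs over finitely many indices, $\max_i\rk(C_i)<\rk(G)$, as required. I expect this to be the main obstacle, because the naive estimate, using $S$ itself as a witness that $H\in\mathcal U(\rk(G))$, yields only $\rk(H)\le\rk(G)$, which is not strict. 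What rescues strictness is that finiteness of the index set keeps $\lambda^\ast$ strictly below $\rk(G)$, while the closure properties certify membership of $H$ already at level $\lambda^\ast$ rather than at $\lambda^\ast+1$; connectedness of the $C_i$ is exactly what legitimises the monotonicity step.

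For part~\eqref{i} I would argue by contradiction, and may assume $\rk(G)>0$ (if $\rk(G)=0$ then $G\in\mathcal U$ and the hypothesis forces $G-S$ to be empty, a degenerate situation outside the inductive use of the lemma). Suppose some $s\in S$ has finite degree, and set $S':=S\setminus\{s\}$; I claim $S'$ still has the defining property, contradicting $\subseteq$-minimality. The components of $G-S'$ that avoid $s$ are precisely components of $G-S$, hence of rank $<\rk(G)$. The component $D$ containing $s$ equals $G[\{s\}\cup\bigcup_{j}V(C_j)]$, where $C_1,\dots,C_k$ are the components of $G-S$ met by the neighbourhood of $s$; since $\deg_G(s)<\infty$ there are only finitely many, and $D-s=\bigsqcup_j C_j$. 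Thus $D$ is obtained from a finite disjoint union of the $C_j$ by adjoining the single vertex $s$, so the computation above gives $\rk(D)\le\max_j\rk(C_j)<\rk(G)$ (the empty maximum being $0<\rk(G)$ in the case $k=0$, where $D$ is the single vertex $\{s\}$). Hence every component of $G-S'$ has rank below $\rk(G)$, the desired contradiction, and so every vertex of $S$ has infinite degree.
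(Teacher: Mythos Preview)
Your proof is correct and follows essentially the same idea as the paper: both establish the key inequality $\rk\bigl(G[S\cup\bigcup_i C_i]\bigr)\le\max_i\rk(C_i)$ and then read off both parts. The only difference is packaging: the paper does a small case split (all $\rk(C_i)=0$ versus not) and builds an explicit witness set $S\cup\bigcup_i S_i$, whereas you isolate the reusable facts that each $\mathcal U(\mu)$ is itself \klasse\ and that connected graphs satisfy $\mathcal U(\lambda)\subseteq\mathcal U(\mu)$ for $\lambda\le\mu$, which lets you avoid the case distinction.
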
 
\begin{proof}
To see \eqref{ii}, we show that $\rk(G[S\cup\bigcup_{i=1}^n C_i])\le \max_i \rk(C_i)$ which is by assumption smaller than $r(G)$. Suppose first that all $C_i$ have rank $0$, i.e.\ lie in~$\cu$. Then so does $G[S\cup\bigcup_{i=1}^n C_i]$ as $\cu$ is \klasse. Otherwise, \fe\ $1\leq i \leq n$ with $r(C_i)>0$ let $S_i$ be a finite vertex set in $C_i$ \st\ the rank of every component of $C_i - S_i$ is less than $\rk(C_i)$. Then $S\cup \bigcup S_i$ is a finite set witnessing the fact that $\rk(G[S\cup\bigcup_{i=1}^n C_i])\leq \max_i \rk(C_i)$.

For \eqref{i}, note that if $x\in S$ has finite degree then the components of $G-(S\setminus\{x\})$ are precisely the components of $G-S$ that are not adjacent to $x$, plus one new component obtained by merging $x$ and the finitely many components of $G-S$ adjacent to $x$. By an argument similar to that of~\eqref{ii}, the rank of this new component is at most the maximum of the ranks of the components that got merged. But then $r(C)<r(G)$ for every component $C$ of $G-(S\setminus\{x\})$, contradicting the minimality of $S$ with this property.
\end{proof}

Here is the main result of this paper:

\begin{theorem}\label{main}
If $\cu$ is a \klasse\ class of graphs and every graph in $\cu$ is \prfr, then every graph in $\clu$ is \prfr.
\end{theorem}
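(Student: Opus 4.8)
The plan is to prove Theorem~\ref{main} by transfinite induction on the rank $\rk(G)$ of the graph $G\in\clu$ under consideration. The base case $\rk(G)=0$ is immediate: then $G\in\cu$, which is \prfr\ by assumption. For the induction step suppose $\rk(G)=\mu>0$ and that every graph in $\clu$ of rank less than $\mu$ is \prfr. Fix $U\subseteq V(G)$ and a partition $\pi$ of $U$; the task is to extend $\pi$ to a partition $\pi'$ of $G$ in which every vertex of $V(G)\setminus U$ is happy. First I would choose a finite vertex set $S$ that is $\subseteq$-minimal with the property that every component of $G-S$ has rank $<\mu$. Such a set exists by Definition~\ref{def:clu}, and by \Lr{S}\,\eqref{i} every vertex of $S$ then has infinite degree---this is precisely the feature that will provide the room to manoeuvre.

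The next step is to reduce the whole problem to the vertices of $S$. For each component $C$ of $G-S$ the graph $G[S\cup C]$ has rank $<\mu$ by \Lr{S}\,\eqref{ii}, so by the induction hypothesis it is \prfr. Suppose we have committed to a partition $\phi$ of $S$ (agreeing with $\pi$ on $S\cap U$). Applying pre-partitionability of $G[S\cup C]$ to the set $S\cup(U\cap V(C))$, pre-coloured by $\phi$ and $\pi$, yields an extension in which every vertex of $V(C)\setminus U$ is happy in $G[S\cup C]$. Since a vertex of $C$ has all of its neighbours inside $S\cup V(C)$, such a vertex is then happy in $G$ as well. Carrying this out for every component simultaneously---the extensions agree on the common part $S$ and are otherwise disjoint---produces a partition $\pi'$ of $G$ extending $\pi$ in which every vertex of $V(G)\setminus(S\cup U)$ is happy. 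Thus the theorem comes down to choosing $\phi$, together with the component extensions, so that in the end every vertex of $S\setminus U$ is happy too.

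Making the finitely many vertices of $S$ happy is where the real work lies, and I expect this to be the main obstacle. Because each $s\in S$ has infinite degree, $s$ is happy exactly when $d(s)$ of its neighbours receive the colour opposite to $\phi(s)$; if the neighbours were already coloured we could simply read off the correct value of $\phi(s)$. The difficulty is the circularity that the colours produced in the components by the previous step themselves depend on $\phi$. To break this circularity I would, for each $s\in S$, secure in advance a reservoir of $d(s)$ neighbours of $s$ that are forced to oppose $s$, taking the reservoirs for distinct vertices of $S$ to be disjoint (possible since $S$ is finite and each neighbourhood has the infinite size $d(s)$) and feeding the prescribed colours of the reservoir vertices into the pre-partitions of the graphs $G[S\cup C]$. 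The delicate point is that a reservoir vertex handed to the pre-partition as a fixed colour is no longer guaranteed to be happy, so one must arrange the happiness of the reservoir vertices at the same time; here the infinite degree of $s$ is what supplies the slack, since at any one moment only few of its neighbours are constrained relative to the size $d(s)$.

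I anticipate that this bookkeeping will require a transfinite construction rather than a single choice: one secures \opps\ for the vertices of $S$ in stages, at successor steps enlarging the reservoirs and re-solving the affected components through the induction hypothesis, and at limit steps passing to unions. The length of this auxiliary process, and the verification that its limits remain genuine partitions in which all previously happy vertices stay happy, is naturally governed by the cofinality of $\mu$ (and of the degrees $d(s)$), which is why that notion is isolated at the very beginning. Once every $s\in S\setminus U$ has been supplied with $d(s)$ opponents while all component vertices remain happy, the resulting $\pi'$ witnesses that $G$ is \prfr, and the transfinite induction is complete.
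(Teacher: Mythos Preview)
Your outer transfinite induction on the rank, the choice of a minimal separator $S$ (so every $s\in S$ has infinite degree), and the reduction of the problem to making the finitely many vertices of $S\setminus U$ happy are all correct and match the paper. The gap is in the ``reservoir'' mechanism you propose for $S$. As you yourself observe, once a neighbour $v$ of $s$ is handed to the pre-partition of $G[S\cup C]$ as a fixed colour, the induction hypothesis no longer promises that $v$ is happy. Your remedy---a transfinite process that enlarges reservoirs at successor steps, re-solves the affected components, and ``passes to unions'' at limits---is not well-defined: re-solving a component may change the partition there, so you are not building a monotone chain of partial functions, and nothing forces the process to converge or to preserve the happiness achieved at earlier stages. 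The remark that the process is governed by the cofinality of $\mu$ is also off target: the rank ordinal plays no role beyond supporting the outer induction; what matters is the cardinal $\kap=|\ccc|$ and the cofinalities of the degrees $d(s)$.

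The paper breaks the circularity by a quite different device. It introduces a \emph{second} transfinite induction, on $\kap=|\ccc|$, and does not try to fix the colouring of $S$ in advance. Instead it groups the components of $G-S$ into $\kap$ finite batches $X_\beta$, solves (via the inner induction hypothesis) each subgraph $G_\beta$ spanned by $S$, a fixed small block $\cx_0$, and the batch $X_\beta$, and then uses the finiteness of $S$: among the $\kap$ solutions $\rho_\beta$ there are, by pigeonhole, $\kap$ many that agree on $S$ (and on a further finite datum). That common restriction is the colouring of $S$, and the corresponding $\rho_\beta$'s are glued. Any vertices of $S$ still unhappy form a set $F$ shown to consist of vertices of degree exactly $\kap$; flipping $F$ makes them happy, damages fewer than $\kap$ components, and those are repaired by one more appeal to the induction hypotheses, with a careful split of $S$ into $S_0$ and $S_1$ according to where each $d(s)$ is concentrated. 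None of this machinery---the inner induction on $\kap$, the pigeonhole extraction of a common $S$-restriction, the $S_0/S_1$ analysis, or the flip-and-repair step---appears in your sketch, and I do not see how to complete the argument without something playing those roles.
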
 

\noindent
As every finite graph is \prfr\ and every \prfr\ graph has an unfriendly partition, Theorem~\ref{main} with $\mathcal U = \mathcal F$ implies Theorem~\ref{rayless}.

\medskip

We prove \Tr{main} in \Sr{secMain}. In \Sr{secLarge} we use it to prove that a considerably larger class than the rayless graphs have unfriendly partitions.

\section{Proof of the main result} \label{secMain}

We now prove \Tr{main}. 

\begin{proof}[Proof of \Tr{main}]
Let $G=(V,E)$ be a graph in $\overline{\mathcal U}$.
We perform transfinite induction on $\rk(G)$. If $\rk(G)=0$ then \g is \prfr\ by  assumption, so suppose that $\rk(G)=\alpha>0$ and the assertion is true for all ordinals 
$\beta<\alpha$. Let $S\subseteq V(G)$ be a minimal separator such that $\rk(C)<\alpha$ for every component of $G - S$, and let $\ccc$ be the set of those components. We will prove that \g is \prfr\ by a second transfinite induction on $\kap:= |\ccc|$. By \Lr{S}, \kap\ must be infinite. We will treat the first step of the induction (i.e.\ the case $\kap=\oo$) more or less in the same way as the induction step; so assume from now on that either $\kap=\oo$ or $\kap>\oo$ and, by the induction hypothesis, any subgraph of \g spanned by $S$ and less than $\kap$ elements of $\ccc$ is \prfr; by \Lr{S}, we can assume this even if $\kap=\oo$.

Let $S_0$ be the set of vertices $s\in S$ for which there is a subset $\ccc_s$ of $\ccc$ of cardinality less than $\kap$ with $d_{ \ccc_s}(s)=d(s)$;
then let $S_1:=S\sm S_0$. Let $\cx_0:= \bigcup_{s\in S_0} \ccc_s$, and assume, without loss of generality, that 
\begin{equation} \label{snull} 
\begin{minipage}[c]{0.85\textwidth}
        $N(s)\subseteq S\cup \bigcup\mathcal X_0$ for every $s\in S_0$ with 
        $d(s)<\kappa$,
\end{minipage}\ignorespacesafterend  
\end{equation}          
where $N(s)$ denotes the set of vertices adjacent with $s$.

We claim that 
\labtequ{si}{if $s\in S_1$ then either $d(s)=\kap$ or $d(s)$ is a singular cardinal of cofinality $\kap$.}
Indeed, all vertices in $S$ with degree less than $\kap$ lie in $S_0$, thus $d(s)\geq \kap$.  If $s$ has degree $\lambda>\kap$ then $\lambda$ is a singular cardinal of cofinality at most $\kap$ as it is the sum of $\kap$ cardinals smaller than $\lambda$, namely of $d_C(s)$ for all $C\in\ccc$ (we know that $d_C(s)< \lam$ because $s\not\in S_0$). It remains to verify that the cofinality of $\lambda$ is not smaller than $\kap$. 

Suppose, on the contrary, that $\lambda$ is the sum of cardinals $\beta_\alp,~\alp<\gam,$ where $\gam<\kap$ and every $\beta_\alp$ is smaller than $\lambda$. Then \fe\ \alp\ \ti\ a component $C^\alp \in \cc$ \st\ $d_{C^\alp}(s)\geq \beta_\alp$; for if not, then $\lam \leq |\cc| \cdot \beta_\alp < \lam$, a contradiction. But then, the components $C^\alp, \alp<\gam$ contain $\lam= d(s)$ many of the neighbours of $s$, and as $\gam<\kap$ this contradicts $s\not\in S_0$.

\comment{
	By construction, all vertices in $S$ with degree less than $\kap$ lie in $S_0$, so every vertex $s\in S_1$ has degree at least $\kap$. Moreover, for every $\ccc'\subseteq\ccc$ with $|\ccc'|<\kap$, $s$ has less than $d(s)$ neighbours in $\bigcup\ccc'$. If $s$ has degree $\lambda>\kap$ then $\lambda$ is a singular cardinal of cofinality at most $\kap$ as it is the sum of $\kap$ cardinals smaller than $\lambda$, namely of $d_C(s)$ for all $C\in\ccc$. Thus easily, we have
\begin{equation} \label{largenbhd} 
\begin{minipage}[c]{0.85\textwidth}
        if $s\in S_1$ has degree $\lambda>\kap$ then $\sup\{d_C(s) \mid C\in\ccc\}=\lambda$,
\end{minipage}\ignorespacesafterend  
\end{equation}          
as $d(s)\le\max\{\kap,\sup\{d_C(s) \mid C\in\ccc\}\}$. We claim that
\begin{equation} \label{si} 
\begin{minipage}[c]{0.85\textwidth}
        if $s\in S_1$ then either $d(s)=\kap$ or $d(s)$ is a singular cardinal of cofinality $\kap$, and in both cases for every set $\ccc'\subseteq \ccc$ with $|\ccc'|<\kap$, $s$ has less than $d(s)$ neighbours in $\bigcup\ccc'$.
\end{minipage}\ignorespacesafterend  
\end{equation}
Indeed, we only need to verify that if $d(s)=\lambda>\kap$ then the cofinality of $\lambda$ is not smaller than $\kap$. 
To reach a contradiction, suppose
	that $\lambda$ is the sum of $\alpha<\kap$ cardinals $\beta_i,~i<\alpha,$ smaller than $\lambda$. By~\eqref{largenbhd} there is, for each $\beta_i$, a component $C_i\in\ccc$ with $d_{C_i}(s)\ge\beta_i$. Then for $\ccc'=\{C_i \mid i<\alpha\}$ we have $|\ccc'|\le\alpha < \kap$ and $d_{\ccc'}(s)=\lambda=d(s)$, which contradicts the fact that $s\notin S_0$.
}

It is possible to partition $\ccc\sm \cx_0$ into $\kap$ finite sets $X_\bet, \bet<\kap$ so that 
\begin{equation} \label{cof} 
\begin{minipage}[c]{0.85\textwidth}
        for every $s\in S_1$ and every  subset $\cz$ of $\ofml{X}{\kap}$ with $|\cz|=\kap$  there holds $d_{\cz}(s)=d(s)$.
\end{minipage}\ignorespacesafterend  
\end{equation}
Indeed, given $s\in S_1$, it is easy,  in both cases of~\eqref{si}, to find a sequence $\oseq{C^s}{\kap}$ of distinct elements of \ccc\ such that for every subset $\mathcal Z'$ of $\{C^s_\bet \mid \bet<\kap\}$ 
with $|\mathcal Z'|=\kap$ there holds $d_{ \mathcal Z'}(s)=d(s)$. 
Furthermore, let $\oseq{D}{\lam}$ be a sequence indexed by some ordinal $\lam\leq \kap$ containing all elements $D$ of \ccc\ that are not in a sequence $\oseq{C^s}{\kap}$ for any $s\in S_1$. 
Depending on whether $\kappa=\omega$ or not, we now define the 
$X_i$, $i<\kappa$, recursively as follows.
If $\kap>\oo$, then for every $s\in S_1$ we let $C^s_{m_i}$ be the first member of $\oseq{C^s}{\kap}$ that is not an element of $\bigcup_{\bet<i} X_\bet$, and let $X_i:=\sgl{D_i} \cup \{C^s_{m_i} \mid s\in S_1\}$ if $i<\lam$ and $X_i:=\{C^s_{m_i} \mid s\in S_1\}$ otherwise.
If $\kap=\oo$, we define $X_i$ in a slightly different way. Namely, we let $X_i:=\sgl{D_i} \cup \bigcup_{s\in S_1}\ccc^s_i$ if $i<\lam$ and $X_i:=\bigcup_{s\in S_1}\ccc^s_i$ otherwise, where 
$\ccc^s_i$ is a finite subset of $\ofml{C^s}{\kap}\sm\bigcup_{\bet<i} X_i$ containing $C^s_{m_i}$ such that $d_{\ccc^s_i}(s)>d_S(s)+d_{\mathcal X_0}(s)$; easily, such a $\ccc^s_i$ exists by~\eqref{si} and the definition of $S_1$. It is now easy to check that~\eqref{cof} is indeed satisfied, and moreover for the case $\kap=\oo$ we have in addition, \fe\  $\cz$ and every $s$ as in~\eqref{cof}, that
\begin{equation} \label{kapom} 
\begin{minipage}[c]{0.85\textwidth}
		$d_{ X}(s)>d_S(s)+ d_{\mathcal X_0}(s)$ for every $X\in\cz$.
\end{minipage}\ignorespacesafterend  
\end{equation}

Now let $U\subseteq V$, and let $\pi_U$ be any partition of $U$. We have to show that there is a partition $\chi$ of $V$ extending $\pi_U$ that is unfriendly for $V\sm U$. 

For every $\beta< \kap$, let $G_\beta$ be the subgraph of \g spanned by $S$ and all components in $\cx_0 \cup X_\beta$. 
Note that $\rk(G_\beta)\le \rk(G)$ for every $\beta$, and $G_\beta-S$ has less components than $G-S$ because $|\mathcal X_0|<\kappa$ and $|X_\beta|<\omega$. 
Thus by our second induction hypothesis ---or by our first induction hypothesis if $r(G_\beta) < r(G)$--- there is a partition $\rho_\beta$ of $G_\beta$ that extends $\pi_U \restr (V(G_\beta)\cap U)$ and is unfriendly for $V(G_\beta) \sm U$.

Define the \defi{$\cx_0$-stamp} of $\rho_\beta$ to be the set of vertices $s\in S_0$ that are given $d(s)$ opponents within $\bigcup\cx_0$ by $\rho_\beta$, i.e.\ those for which $a_{\rho_\bet}(s, \bigcup \cx_0)=d(s)$ holds. Since $S$ is finite, there is a partition $\rho_S$ of $S$, a subset $S'$ of $S_0$, and a subsequence $(\rho_\alpha)_{\alpha \in \cj}$ of $\oseq{\rho}{\kappa}$ with $|\cj|=\kap$ such that $\rho_\alpha \restr S = \rho_S$ and the $\cx_0$-stamp of $\rho_\alpha$ is $S'$ for every $\alpha \in \cj$. Using this subsequence, we will now construct a partition $\tau$ of \g that is unfriendly for all but finitely many of the vertices in $V\sm U$, and later modify $\tau$ to obtain the desired partition $\chi$ that is unfriendly for $V\sm U$. For every $\alpha \in \cj$, we partition $X_\alpha$ as in $\rho_\alpha$; formally, \fe\ vertex $x$ in $\bigcup X_\alpha$, let $\tau(x):=\rho_\alpha(x)$. We proceed similarly with $\cx_0$: we pick some 
$\alpha_0\in \cj$ and then \fe\ vertex $x$ in $\bigcup \cx_0$, we let $\tau(x):=\rho_{\alpha_0}(x)$. In order to partition the remaining vertices of the graph, we   define the partition $\pi_{U\cup S}:=\pi_U \cup \rho_S$; this is well-defined since $\rho_S$ is by construction compatible with $\pi_U$. Now for every $\beta \in \kap \sm \cj$ we pick a partition $\rho'_\beta$ of the graph $G'_\beta:=G[S\cup V(\bigcup X_\beta)]$ that extends $\pi_{U\cup S} \restr G'_\beta$ and is unfriendly for 
$V(G'_\beta)\sm (U\cup S)$ --- such a partition exists by our first inductive hypothesis, since $\rk(G'_\beta)<\rk(G)$ --- and again we partition the vertices in $\bigcup X_\beta$ according to $\rho'_\beta$: \fe\ vertex $x$ in $\bigcup X_\beta$, we let $\tau(x):=\rho_\beta(x)$. 

By construction, $\tau$ extends $\pi_U$. Moreover, it is straightforward to check that 
\begin{equation} \label{rho} 
\begin{minipage}[c]{0.85\textwidth}
        $\tau$ is unfriendly for $V \sm (S_1 \cup U)$.
\end{minipage}\ignorespacesafterend  
\end{equation}
Since the $\rho_\alpha$ have the same $\cx_0$-stamp \fe\ $\alpha \in \cj$, it follows that
\begin{equation} \label{sign} 
\begin{minipage}[c]{0.85\textwidth}
        \fe\ $s\in S_0$, either $a_{\tau}(s,\bigcup \cx_0)=d(s)$ or \fe\ $\alpha \in \cj$ there holds $a_{\tau}(s,\bigcup X_\alpha)=d(s)$ (or both).
\end{minipage}\ignorespacesafterend  
\end{equation}  

Let $F\subseteq S_1$ be the set of vertices of $V\setminus U$ that are not happy
in $\tau$. We claim that 
\begin{equation} \label{kappa} 
\begin{minipage}[c]{0.85\textwidth}
        \fe\ $r\in F$ there holds $d(r)=\kap$.
\end{minipage}\ignorespacesafterend  
\end{equation}
Indeed, pick any $r\in S_1\setminus U$ with $d(r)>\kappa$. We need to show that
this choice implies $r\notin F$. 
By the definition of $S_0$ and since $r\notin S_0$, $r$ cannot have full 
degree in $\bigcup\mathcal X_0$, i.e.\ $\delta:=d_{\mathcal X_0}(d)<d(r)$.
Define $\mathcal Z'$ to be the set of those $X_\alpha$, $\alpha\in\mathcal J$,
for which $d_{ X_\alpha}(r)\leq\delta$. 
Since $|\mathcal Z'|\leq |\mathcal J|=\kappa<d(r)$ and $\delta<d(r)$,
we obtain that
$d_{\mathcal Z'}(r)\leq \kappa\cdot\delta<d(r)$.
Hence, from \eqref{cof} it follows that $|\mathcal Z'|<\kappa$
and thus the 
set $\mathcal Z:=\{X_\alpha:\alpha\in\mathcal J\}\setminus\mathcal Z'$
has cardinality $\kappa$. Using~\eqref{cof} again 
yields $d_{\mathcal Z}(r)=d(r)$.

Now consider one such $X_\alpha\in\mathcal Z$.
Recall that $r$ is happy in the partition $\rho_\alpha$ of
the graph $G_\alpha$, which we defined to be the induced subgraph 
on $S$ together with all components in $\mathcal X_0\cup X_\alpha$.
Since $d_{ X_\alpha}(r)>\delta=d_{\mathcal X_0}(r)$,
this means that $r$ has, with respect to $\rho_\alpha$,
$d_{ X_\alpha}(r)$ opponents in $\bigcup X_\alpha$.
Since $\tau$ and $\rho_\alpha$ coincide on $\bigcup X_\alpha$
by the definition of $\tau$, this implies
that $a_\tau(r,\bigcup X_\alpha)=d_{ X_\alpha}(r)$.
As, on the other hand, we have found that 
$d_{\mathcal Z}(r)=d(r)$, we deduce that $r$ has $d(r)$
opponents in $\tau$ and is therefore happy.
Since $F$ comprises unhappy vertices, we have proved~\eqref{kappa}.

We claim that $F$ is empty if $\kap=\oo$. Indeed, if $\kap=\oo$ then every vertex $r$ in $F$ has degree $\oo$ by~\eqref{kappa} and so $d_{\cx_0}(r)$ is finite. Thus by~\eqref{kapom} and the fact that $\tau$ coincides with the 
partition $\rho_\alpha$ of $G_\alp$, which is unfriendly for $V(G_\alpha)\sm U$, 
$\tau$ gives $r$ an opponent in $X_\alp$ for every $\alp\in \cj$. But then $r$ has $|\cj|=\kap=\oo$ opponents in $\tau$, contradicting the assumption that $r\in F$. Thus $F=\emptyset$ if $\kap=\oo$, which means that $\tau$  
is unfriendly for $V\setminus U$ and we are done.
So we may from now on assume that 
\begin{equation} \label{uncntbl} 
\begin{minipage}[c]{0.85\textwidth}
$\kappa$ is an uncountable cardinal.
\end{minipage}\ignorespacesafterend  
\end{equation}

Denote by $\tau'$ the partition obtained from $\tau$ by \defi{flipping $F$},
i.e.\ by moving each vertex in $F$ from its partition class in $\tau$ 
to the other partition class (formally, by changing the image under $\tau$ of every vertex in $F$).
Then, $\tau'$ is unfriendly for $S\sm U$, as all vertices in $S$ have infinite degree by \Lr{S}, but there might be vertices in the rest of the graph that were made unhappy by flipping $F$. 
It follows from~\eqref{rho} and~\eqref{kappa} that the set $\cy$ of elements of 
$\ccc\sm \cx_0$ (we will turn our attention to $\cx_0$ later) that  contain vertices that are unhappy in $\tau'$ has cardinality less than \kap, because each vertex in $F$ had less than \kap\ \opps\ in $\tau$. 

We now modify $\tau'$ within $\cy$ to obtain a new partition $\phi$ of \g that is unfriendly for $V \sm (V(\bigcup\cx_0)\cup U)$. 
For every component $C\in \cy$, let $U':=(U\cap V(C)) \cup S$,  and
pick a partition $\phi_C$ of $G[S \cup V(C)]$ that extends $\tau' \restr U'$ 
and is unfriendly for $V(C) \sm U'$; such a partition exists by our first induction hypothesis as $\rk(C)<\rk(G)$. 
Then, \fe\ $x\in V(C)$ put $\phi(x):= \phi_C(x)$. Vertices that are not contained within $\cy$ we leave unchanged: \fe\ $x\in V \sm V(\bigcup \cy)$ put $\phi(x):= \tau'(x)$.

We claim that 
\begin{equation} \label{phi} 
\begin{minipage}[c]{0.85\textwidth}
$\phi$ is unfriendly for $V\sm (V(\bigcup\cx_0) \cup U)$. 
\end{minipage}\ignorespacesafterend  
\end{equation}
Indeed, $\phi$ is by construction unfriendly for $V\sm (V(\bigcup\cx_0) \cup U \cup S)$. To see that it is unfriendly for $S\sm U$ as well, recall that $\tau'$ was unfriendly for $S\sm U$, and suppose there is an $s\in S$ unhappy in $\phi$. Since $s$ was happy in $\tau'$, it follows that $s$ has $d(s)$ many neighbours within $\cy$, which puts $s$ in $S_0$ as $|\mathcal Y|<\kappa$. 
But then by~\eqref{sign} if $s$ is not given $d(s)$ \opps\ in $\cx_0$ by $\tau$, and thus also by $\phi$, then $s$ is given $d(s)$ \opps\ in each of the \kap\ many $X_\alp$ with $\alpha\in\cj$ that do not contain a component in $\cy$, a contradiction. 
This establishes Claim~\eqref{phi}.

Finally, we turn our attention to the unhappy vertices in $\bigcup\mathcal X_0$.
These vertices were happy in $\tau$ and thus had an opponent in $\tau$
that is lost in $\phi$, which means that they are adjacent
to a vertex in $F$. Observe that
\begin{equation} \label{gamma} 
\begin{minipage}[c]{0.85\textwidth}
$\gamma:=\max\{\omega,\max_{r\in F} a_\tau(r,V(\bigcup\mathcal X_0))\}<\kappa.$
\end{minipage}\ignorespacesafterend  
\end{equation}
Indeed, recall that $\omega<\kappa$ by~\eqref{uncntbl}. Moreover,
by the definition of $S_1$ each vertex $r\in F\subseteq S_1$ has less than 
full degree in $V(\bigcup\mathcal X_0)$, and thus
$a_\tau(r,V(\bigcup\mathcal X_0))\leq d_{\mathcal X_0}(r)<d_G(r)$.
Since $d_G(r)=\kappa$ by~\eqref{kappa} the claim follows.

Let $H:=G[V(\bigcup \cx_0) \cup S]$ and define $S'_0$ to be the 
set of vertices $s\in S_0$ with $a_\phi(s, V(\bigcup \cx_0))<d_G(s)$.
Putting $\tilde U:=(U\cap V(H))\cup S_0'\cup S_1$ we shall find 
a partition $\rho$ of $H$ with the following properties.
\begin{enumerate}[(i)]
\item\label{ai} $\rho$ is unfriendly for $V(H)\setminus\tilde U$;
\item\label{aii} $\rho$ extends $\phi\upharpoonright\tilde U$; and 
\item\label{aiii} for every $x\in V(H)$ if $d_H(x)>\gamma$ then $\rho(x)=\phi(x)$.
\end{enumerate}
Intuitively, what $\rho$ accomplishes is to repartition the vertices in $\cx_0$ and part of the vertices in $S_0$ in such a way that firstly, every vertex in $\cx_0$ or $S$ is made happy, and secondly, any vertex in $S_0$ whose partition class we changed has relatively small degree. The latter condition on the degree will then help make sure that the happiness of the vertices in $\bigcup X_\alp$ is not affected by the repartitioning imposed by $\rho$.

Before we construct $\rho$ let us check that it would indeed allow us to finish the proof.
For this, we modify $\phi$ within $H$ according to $\rho$ to obtain a new partition 
$\chi$ of \G: \fe\ $x\in V(H)$ let $\chi(x):=\rho(x)$ and \fe\ $x \in V\sm V(H)$ 
let $\chi(x):=\phi(x)$. The following table summarises the definitions of some of the partitions of \g we have defined so far, and indicates which vertices are happy in each partition.

\bigbreak

\newcommand{\tik}{\checkmark}
\newcommand{\kit}{\ensuremath{\times}}
\begin{tabular}{c|c|c|c|c|c|l}
	&  $\cx_0$	&$\cc\sm \cx_0$	&	$S_0$	&	$F$	&	$S_1 \sm F$	&	\\
	\hline
$\tau$&	\tik	&	\tik		&	\tik	&	\kit&	\tik		& composed from the $\rho_\bet$\\
	\hline
$\tau'$& ?		&	?			&	\tik	&	\tik&	\tik		& obtained from $\tau$ by flipping $F$ \\
	\hline
$\phi$ & ?		&	\tik		&	\tik	&	\tik&	\tik		& obtained from $\tau'$ by changes in $\cy$ \\
	\hline
$\chi$ &	\tik&	\tik		&	\tik	&	\tik&	\tik		& obtained from $\phi$ by changes in $H$
\end{tabular}

\bigbreak

We claim that $\chi$ is unfriendly for $V\sm U$. 
Indeed, any vertex $x\in V(\bigcup\mathcal X_0)\sm U$ is clearly happy by
condition~\eqref{ai}. For a vertex $s\in S\sm U$ we distinguish three cases.
If $s\in S_1$ then $s$ has less that $d_G(s)$ neighbours in 
$\bigcup\cx_0$ by the definition of $S_1$, and thus by~\eqref{aii} 
$s$ is happy in $\chi$ as it was happy in $\phi$. If $s\in S'_0$
then, by the definition of $S'_0$ and~\eqref{phi}, it had $d_G(s)$ opponents outside
$\bigcup\mathcal X_0$ in $\phi$. As $\phi$ and $\chi$ coincide outside
$\bigcup\mathcal X_0$ and as $\phi(s)=\chi(s)$ by~\eqref{aii}, the
vertex $s$ is happy in $\chi$. Finally, if $s\in S_0\sm S'_0$ then
$\chi(s)=\rho(s)$. As $s$ is happy in $\rho$ (within $H$) by~\eqref{ai}
and as $\chi$ coincides with $\rho$ on $H$, we find that $s$ is happy in $\chi$
as well (note that $d_H(s)=d_G(s)$ since $s\in S_0$).
It remains to check that every vertex in $V\sm (S\cup V(\bigcup\mathcal X_0)\cup U)$
is happy too. So consider a vertex $x$ in $\bigcup X_\alp \sm U$ for  some
ordinal $\alpha$. 
Such an $x$ was happy in $\phi$, but since we changed the partition of $S$ we may have made it unhappy in $\chi$. This is however not the case: if $s\in S$ and $\phi(s)\neq \chi(s)$, then by~\eqref{aii} and~\eqref{aiii}  we have $s\in S_0$ and 
$d_G(s)=d_H(s)\leq \gamma < \kap$, where  the last inequality is~\eqref{gamma}. 
Thus by~\eqref{snull} $s$ has all its neighbours in $\bigcup\cx_0$, 
which means that  $s$ is not a neighbour of $x$. 
In conclusion, $x$ is still happy in $\chi$ and this completes the proof that $\chi$ is unfriendly for $V\sm U$. 

\medskip

To finish the proof of the theorem we still need to construct the partition~$\rho$.
Define $A$ to be the set of all vertices $a$ in $H\sm \tilde U$ with $d_H(a)\leq\gamma$. 
Denote by $K$ the union 
of the vertex sets of 
all the components of $H[A]$ that 
contain an unhappy vertex (with respect to $\phi$). Observe that such 
an unhappy vertex must have been an opponent in $\tau$ of some vertex in $F$. Thus by~\eqref{gamma} there are at most \gam\ components of this kind, and as each of them has at most \gam\ vertices we obtain 
\labtequ{kaga}{$|K| \leq \gam$.}

By our second induction hypothesis the graph $H$ is \prfr, thus there is a partition $\rho$ of $H$ extending $\phi\upharpoonright \tilde U\cup(V(H)\sm K)$
such that every vertex in $K\sm\tilde U$ is happy.
Clearly, $\rho$ satisfies~\eqref{aii} and~\eqref{aiii}.

To see that every vertex $x$ in $V(H) \sm (K \cup \tilde U)$ is happy too, 
which would complete the proof that \eqref{ai} is satisfied as well, we note that  $\rho$ differs from $\phi$ only within $K$, and distinguish two cases. 
If $d_H(x)\leq \gamma$, then since $x\not\in K$, its neighbourhood $N_H(x)$ does not meet $K$ by the definition of $A$, thus $\phi$ and $\rho$ coincide in $N_H(x)$. By the definition of $K$ it follows that $x$ was happy in $\phi$, thus it is also happy in $\rho$ as its neighbourhood was not repartitioned. 
If, on the other hand, $d_H(x)> \gamma$, then $x$ was happy in $\phi$ because $|F|$ is finite, \gam\ is infinite, and $x$ was happy in $\tau$. But then $x$ is happy in $\rho$ by~\eqref{kaga} since only vertices in $K$ were repartitioned while changing $\phi$ to $\rho$. 
\end{proof}

\section{More graphs with \ufp s} \label{secLarge}

In the rest of the paper we shall strengthen Theorem~\ref{rayless} by applying Theorem~\ref{main} to a larger \klasse\ class than just~$\mathcal F$. A~proof of Theorem~\ref{brush} will follow as a corollary.

Given a graph $G=(V,E)$, 
let $V_\infty$ be the set of all vertices of infinite degree, and denote
by $V^*$ the set of those vertices in $V_\infty$ that have
only finitely many neighbours in $V_\infty$. 
Let $\cw$ be the class of countable graphs $G$ such that $V^*(G)$ is finite. 
In particular, $\cw$ contains all countable graphs \g for which $\vinf(G)$ or $V(G)\sm\vinf(G)$ is finite. Note that $\clx{\cw}$ also contains all uncountable graphs \g for which $\vinf(G)$ is finite, since deleting $\vinf(G)$ shows that $G$ has rank~1 in~$\clx{\cw}$.
Clearly, $\cw$ is \klasse. We claim that 

\begin{theorem}\label{W}
Every graph in $\overline\cw$ is \prfr. In particular, every such graph has an unfriendly partition.
\end{theorem}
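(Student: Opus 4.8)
The plan is to apply \Tr{main}. Since $\cw$ is \klasse, that theorem reduces the assertion to its base case: it suffices to prove that every $G\in\cw$ is itself \prfr. So I fix a countable graph $G=(V,E)$ with $V^*:=V^*(G)$ finite, a set $U\subseteq V$, and a partition $\pi$ of $U$, and I must extend $\pi$ to a partition of $G$ that is unfriendly for $V\sm U$. I would partition $V$ into three parts: the finite set $V^*$; the set $W:=\vinf\sm V^*$ of infinite-degree vertices, each of which has infinitely many neighbours in $\vinf$; and the finite-degree vertices $\vfin$. The whole point of isolating $W$ is that its vertices can be made happy using opponents drawn entirely from $\vinf$, so that the delicate finite-degree part of the graph need not be touched on their account.

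First I would colour $\vinf$ on its own. In the subgraph $G[\vinf]$ every vertex of $W$ has infinite degree, while every vertex of the finite set $V^*$ has only finite degree, so here it suffices to make the vertices of $W\sm U$ happy. For each such $w$ there are two cases: either infinitely many of its $\vinf$-neighbours lie in $U$ --- whence infinitely many share a colour $c$ and I put $w$ into class $1-c$ --- or $w$ has infinitely many $\vinf$-neighbours outside $U$, which I reserve over $\omega$ rounds and colour opposite to $w$ (at each finite stage only finitely many vertices are coloured, so fresh neighbours always remain available). Either way every $w\in W\sm U$ ends up with infinitely many opponents inside $\vinf$. This step fixes the colours of all of $\vinf$, in particular of $V^*$, while colouring no finite-degree vertex.

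It then remains to colour $\vfin\sm U$ so that every finite-degree vertex of $V\sm U$ becomes happy and, at the same time, every $v\in V^*\sm U$ that is not yet happy is supplied with infinitely many opponents among its finite-degree neighbours (of which it has infinitely many, since it has only finitely many in $\vinf$). Here I would invoke compactness, in the form of K\"onig's Infinity Lemma, exactly as in the standard proof that locally finite graphs are \prfr: the graph induced on the free finite-degree vertices is locally finite, with the already-coloured vertices of $\vinf$ and of $U$ serving as fixed boundary data. I build finite partial colourings of an increasing exhaustion of $\vfin$, each of them making happy every finite-degree vertex all of whose neighbours are already coloured, and each honouring the promise that, among the first $f(n)$ finite-degree neighbours of each $v\in V^*\sm U$, at least $n$ are coloured opposite to $v$; a limit colouring extracted by K\"onig's Lemma then makes every finite-degree vertex of $V\sm U$ happy and gives each such $v$ infinitely many opponents.

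The main obstacle is precisely this last compactness step: I must verify that every finite partial colouring is extendable, that is, that the local-balance demands of the finite-degree vertices are simultaneously compatible with the fixed boundary inherited from the first phase, with the prescribed $\pi$, and with the finitely many infinite-opponent promises for $V^*$ --- and, crucially, that forcing a finite-degree neighbour of some $v\in V^*$ to be an opponent of $v$ never prevents that neighbour from itself being made happy. Because $V^*$ is finite and each of its vertices has infinitely many finite-degree neighbours, there is ample room to meet the promises, and a single opponent-constraint on a finite-degree vertex can always be absorbed using its remaining free neighbours. Making this bookkeeping precise --- so that the finite stages form a tree to which K\"onig's Infinity Lemma applies --- is the technical heart of the proof.
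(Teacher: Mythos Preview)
Your overall architecture agrees with the paper's: reduce via \Tr{main} to showing that every $G\in\cw$ is \prfr, first make the vertices of $W=\vinf\sm V^*$ happy using neighbours inside $\vinf$, and then deal with the finite-degree vertices and $V^*$. The divergence is in this last step. The paper does \emph{not} attempt it directly but quotes a lemma of Aharoni, Milner and Prikry (\Lr{AMP}): if $G$ is countable and only finitely many vertices of $V\sm U$ have infinite degree, then every partition of $U$ extends to one unfriendly for $V\sm U$. Applied with $U$ equal to (the given $U$ together with) $\vinf\sm V^*$, this finishes the proof at once, and --- crucially --- it determines the colours of the vertices in $V^*$ \emph{simultaneously} with those of the finite-degree vertices.

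Your version instead fixes the colours of $V^*$ already in the first phase and then hopes to supply each $v\in V^*\sm U$ with infinitely many opponents among its finite-degree neighbours by a K\"onig-type argument. This does not work: a premature choice of colour for $v$ can be incompatible with the happiness of its finite-degree neighbours. Take $V=\{v\}\cup\{x_i:i\in\N\}\cup\{y_i,z_i:i\in\N\}$ with edges $vx_i$, $x_iy_i$, $x_iz_i$; let $U=\{y_i,z_i:i\in\N\}$ with $\pi\equiv 1$ on $U$. Here $\vinf=V^*=\{v\}$ and $W=\emptyset$, so your first phase colours $v$ arbitrarily. If $v$ receives colour $0$, then each $x_i$ sees neighbours coloured $0,1,1$, and happiness forces $x_i=0$; thus $v$ acquires no opponent at all among the $x_i$, and your promise cannot be met even at stage $n=1$. (With $v=1$ everything works, so $G$ \emph{is} \prfr; the point is that your phase~1 has no mechanism for choosing the right colour.) Your claim that ``a single opponent-constraint on a finite-degree vertex can always be absorbed using its remaining free neighbours'' is precisely what fails here: the $x_i$ have no free neighbours. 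The proof of \Lr{AMP} is not a routine compactness argument --- it goes through strongly maximal partitions followed by a bounded sequence of corrective flips --- and this is the ingredient your sketch is missing.
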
 

In order to prove this we will need the following lemma from \cite[Lemma~3]{AhMiUnf}.

\begin{lemma}\label{AMP}
Let $G=(V,E)$ be a countable graph and let $U$ be a subset of $V$ such 
that only finitely many vertices in $V\sm U$ have infinite degree (in $G$). 
Then for every partition $\pi$ of $G[U]$  there 
exists a partition $\pi'$ of $G$ that extends $\pi$ and
which is unfriendly for $V\sm U$.
\end{lemma} 
\note{PROOF: A partition $\pi$ of $G$ is \defi{strongly maximal} if for every finite $W\subset V(G)$ and every $W'\subseteq W$, flipping $W'$ does not increase the number of cross-edges incident with $W$, which we call the \defi{happiness} of $W$ (in $\pi$) and denote by $h_{\pi}(W)$. Given a partition $\pi$ of a set $U\subset V$, begin by putting the vertices in $V_\infty\sm U$ all to side $0$ and then use compactness to extend this partition of $U\cup V_\infty$ to a \smp\ $\pi'$ on the rest of $V$. All vertices in $V\sm(U\cup V_\infty)$ are happy in $\pi'$, the only unhappy vertices in $V\sm U$ can lie in $V_\infty$. Let $F$ be the set of those vertices and consider the partition $\pi'*F$, that is $\pi'$ with $F$ flipped. Since the vertices in $F$ were unhappy in $\pi'$ there were only finitely many crossing edges incident with $F$, say $k$. Now flipping $F$ has made all vertices in $F$ happy, but may have made some vertices in $V\sm(U\cup V_\infty)$ unhappy. If there are no such unhappy vertices we have found a partition that is unfriendly on $V\sm U$, so we may assume that there is an unhappy vertex $v_1$. Flip $v_1$ to obtain a partition $\pi_1$ and let $v_2$ be a vertex in $V\sm(U\cup V_\infty)$ that is unhappy in $\pi_1$---if there is no such vertex, then $\pi_1$ is unfriendly on $V\sm U$. Assume that we can repeat this procedure $2k+1$ times, having flipped vertices $v_1,\dotsc,v_{2k+1}$ (not necessarily distinct---it is allowed to flip vertices back again) obtaining partitions $\pi_1,\dotsc,\pi_{2k+1}$. Let $W:=\{v_1,\dotsc,v_{2k+1}\}$ and let $W'\subset W$ be the set of vertices that appear in $v_1,\dotsc,v_{2k+1}$ an odd number of times. Then $\pi_{2k+1}$ is $(\pi'*F)*W'$, and the happiness of $W$ has increased by at least $2k+1$ (i.e.\ $h_{\pi_{2k+1}}(W)\ge h_{\pi*F}(W)+2k+1$), as $h_{\pi'*F}(W) < h_{\pi_1}(W) < h_{\pi_2}(W) < \dotsb < h_{\pi_{2k+1}}(W)$. But then flipping $W'$ in $\pi'$ increases the happiness of $W$: Every crossing edge incident with $W$ gained by flipping $W'$ in $\pi'*F$ that is not gained by flipping $W'$ in $\pi'$ clearly is an $F$--$W'$~edge as it would have been gained in both situations if it had no endvx in $F$ and it would not have been gained at all if it had no endvx in $W'$. Moreover, that such an edge is gained by flipping $W'$ in $\pi'*F$ means that its endvx in $W'$ lies in side $1$ in $\pi'*F$ (hence also in $\pi'$), as all of $F$ lies in side $1$ in $\pi'*F$. But then this edge was a crossing edge in $\pi'$ incident with $F$. Analogously, every crossing edge that is lost by flipping $W'$ in $\pi'$ either is also lost by flipping $W'$ in $\pi'*F$ or it is a crossing edge in $\pi'$ incident with $F$. As there are only $k$ crossing edges in $\pi'$ incident with $F$, by flipping $W'$ in $\pi'$ we gained all crossing edges we gained by flipping $W'$ in $\pi'*F$ minus at most $k$ edges, and we lost at most $k$ crossing edges more than by flipping $W'$ in $\pi'*F$, so $h_{\pi'*W'}(W)-h_{\pi'}(W)\ge h_{\pi'*F*W'=\pi_{2k+1}}(W)-h_{\pi'*F}(W)-2k\ge 1$, which contradicts the fact that $\pi'$ is a \smp\ on $V\sm(U\cup V_\infty)$. Hence the construction has to terminate after $m<2k+1$ steps, meaning that we have found a partition $\pi_m$ that is unfriendly for $V\sm U$.}

\begin{proof}[Proof of \Tr{W}]
Consider a graph $G$ in $\mathcal W$.
We will prove that \g has an \ufp. It is easy to modify this proof in order to show that \g is \prfr. The theorem then follows from Theorem~\ref{main}.

First, let us construct an unfriendly partition $\pi$ of $G[V_\infty\sm V^*]$.
Observe that $|V^*|<\infty$ implies that every vertex in $G[V_\infty\sm V^*]$
has infinite degree. Pick a sequence $(v_i)_{i\in\mathbb N}$ 
in which every vertex in $V_\infty\sm V^*$ appears infinitely often. 
We go through this sequence, and if $\pi(v_i)$ has not been defined yet
we set $\pi(v_i)=0$. Otherwise, we choose a neighbour $y\in V_\infty\sm V^*$
of $v_i$ with $\pi(y)$ still undefined and set $\pi(y):=1-\pi(v_i)$. It is now easy to check that $\pi$ is indeed unfriendly.

Next, Lemma~\ref{AMP} yields a partition $\pi'$ of $G$ extending $\pi$
so that $\pi$ is unfriendly for $V(G)\sm(V_\infty\sm V^*)$. Since 
every vertex in $V_\infty\sm V^*$ already had infinitely many 
opponents in $\pi$, it follows that $\pi'$ is unfriendly for all of $V(G)$.
\end{proof}

The countable graphs in $\overline{\mathcal W}$ can be characterised as follows.

\begin{proposition}\label{ccw}
A countable graph \g lies in $\clx{\cw}$ \iff\ it contains no comb with all teeth in $V^*(G)$.
\end{proposition}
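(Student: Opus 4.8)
The plan is to prove both directions of the equivalence by relating the combinatorial rank structure of $\clx{\cw}$ to the existence of a comb whose teeth all lie in $V^*(G)$. Recall that a comb consists of a ray (the spine) together with infinitely many disjoint paths (the teeth, possibly trivial) meeting the ray exactly in their endpoints; the ends of the teeth are its teeth vertices. I would first unwind what membership in $\clx{\cw}$ means: by Definition~\ref{def:clu}, $G \in \clx{\cw}$ \iff\ there is a finite $S$ so that every component of $G-S$ has strictly smaller rank, the base case being membership in $\cw$, i.e.\ finitely many vertices in $V^*$. The key observation I would isolate is how $V^*$ behaves under deleting a finite set $S$: since $S$ is finite, a vertex $x \in V_\infty(G-S)$ has only finitely many neighbours in $V_\infty(G-S)$ essentially \iff\ the same held in $G$ (deleting finitely many vertices changes each relevant count by a finite amount), so $V^*$ is stable up to finite perturbation, and crucially a comb in $G$ with teeth in $V^*(G)$ restricts to a comb with teeth in $V^*$ of some component of $G-S$ once we discard the finitely many teeth meeting $S$.

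For the forward direction I would argue by transfinite induction on $\rk(G)$ that $G \in \clx{\cw}$ forbids such a comb. In the base case $G \in \cw$, the set $V^*(G)$ is finite, so no comb can have infinitely many disjoint teeth ending in $V^*$. For the inductive step, take the finite separator $S$ with all components $C$ of $G-S$ of smaller rank. Suppose for contradiction $G$ contains a comb with all teeth in $V^*(G)$. Only finitely many teeth can meet the finite set $S$, so after deleting those we still have infinitely many teeth, and since the spine is a ray it must eventually lie in a single component $C$ of $G-S$ (a ray cannot repeatedly cross a finite separator). Then $C$ contains a subcomb whose teeth lie in $V^*(G) \cap V(C)$; using the stability of $V^*$ noted above, these teeth lie in $V^*(C)$, so $C$ contains a comb with all teeth in $V^*(C)$, contradicting the induction hypothesis since $\rk(C)<\rk(G)$.

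For the converse I would prove the contrapositive: if $G \notin \clx{\cw}$ then $G$ contains a comb with all teeth in $V^*(G)$. This is where I expect the main difficulty, since it requires extracting an actual comb from a purely structural failure of the rank hierarchy. The natural tool is Lemma~\ref{LemCombStarC} (the star–comb lemma \cite[Lemma~8.2.2]{diestelBook05}): applied to $U := V^*(G)$ in a connected piece of $G$, it yields either a comb with all teeth in $V^*(G)$ (which is exactly what we want) or a subdivided star with infinitely many leaves in $V^*(G)$. The crux is to rule out, or to exploit, the star alternative. A subdivided star with infinitely many leaves in $V^*$ has an infinite-degree centre, but this alone need not contradict $G \notin \clx{\cw}$; so I would instead run an inductive peeling argument. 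Assuming $G$ contains no such comb, I would try to build a finite separator $S$ witnessing $G \in \clx{\cw}$: if $V^*(G)$ were infinite yet every star–comb invocation returned a star, the infinitely many star centres would have to be handled, and one shows by a König-type or well-foundedness argument (Lemma~\ref{InfLemma}) that the absence of a comb forces the ranks of the components of $G-S$ to descend, terminating the recursion. The delicate point throughout is bookkeeping the interaction between $V^*$ and the finite separators so that ``teeth in $V^*(G)$'' is preserved when passing to components, which is exactly the stability lemma I would establish at the outset.
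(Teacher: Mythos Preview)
Your forward direction is correct and essentially the same as the paper's: transfinite induction on rank, passing to a component of $G-S$ containing a tail of the comb. Your explicit remark about the stability of $V^*$ under deleting a finite set is a point the paper leaves implicit.

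The converse direction, however, has a genuine gap. You invoke the star--comb lemma on $U=V^*(G)$ and then need to dispose of the star alternative, but you never actually do so: the sentence about ``infinitely many star centres'' and a ``K\"onig-type or well-foundedness argument'' does not specify any concrete procedure, any candidate separator $S$, or any reason why ranks would descend. A subdivided star with infinitely many leaves in $V^*$ gives you a vertex of infinite degree at the centre and nothing more; it neither contradicts $G\notin\clx{\cw}$ nor produces a comb, and there is no evident iteration that turns repeated stars into a finite separator. As written, this half of the proof is a sketch of a hope rather than an argument.

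The paper avoids the star--comb lemma entirely and builds the comb directly. The key observation you are missing is this: if $G\notin\clx{\cw}$, then after deleting \emph{any} finite set some component $C$ still lies outside $\clx{\cw}$ (otherwise that finite set would witness membership), and since $C$ is countable and $C\notin\cw$, the set $V^*(C)\subseteq V^*(G)$ is infinite, so in particular nonempty. One then iterates: pick $v_0\in V^*$ in a bad component $C_0$; some component $C_1$ of $C_0-v_0$ is still bad, pick $v_1\in V^*\cap C_1$ and a $v_0$--$v_1$ path $P_1$; some component $C_2$ of $C_1-P_1$ is bad, pick $v_2$ and a path $P_2$ from $P_1$ to $v_2$; and so on. The union $\bigcup_i P_i$ is the desired comb with teeth $v_0,v_1,\dotsc$ in $V^*(G)$. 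This is both simpler and complete; I recommend replacing your star--comb attempt with this construction.
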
 
\begin{proof} 
Suppose there are \comment{DON'T NEED TO SAY "COUNTABLE" HERE}%
graphs in $\clx{\cw}$ that contain a comb  with all teeth in $V^*(G)$. Then, there is such a graph $G$ that has minimal rank among all those graphs; let $C$ be a comb in $G$ with all its teeth in $V^*(G)$. Clearly, \g does not have rank $0$, so there is a finite set $S$ of vertices such that all components of $G-S$ have rank smaller than $\rk(G)$. But one of the components contains a tail of $C$, contradicting the minimality of $\rk(G)$.

Conversely, let $G$ be a countable graph not in $\clx{\cw}$. 
Then
there is a component $C_0$ of $G$ that
does not lie in $\overline{\mathcal W}$ (otherwise $S=\emptyset$ 
is a separator as in Definition~\ref{def:clu} showing that $G\in\overline{\mathcal W}$).
Since $C_0\notin\mathcal W$ although $C_0$ is countable, \ti\ a vertex $v_0\in V^*$ in $C_0$. 
At least one component $C_1$ of $C_0-v_0$ is not in 
$\overline{\mathcal W}$, and thus contains a vertex $v_1\in V^*(G)$.
Let $P_1$ be a $v_0$--$v_1$~path in $C_1\cup\{v_0\}$.
Now recursively for $i=1,2,\dotsc$, let $C_i$ be a component of 
$C_{i-1}-P_{i-1}$ that is not in $\overline{\mathcal W}$,
let $v_{i}$ be a vertex of $V^*$ in $C_i$ and let $P_i$ be a 
$V(P_{i-1}\cap C_{i-1})$--$v_{i}$~path in $G[C_i\cup P_{i-1}]$. 
It is not hard to see that $\bigcup_{i<\oo}P_i$ is a comb with teeth $v_0,v_1,\dotsc$ in $V^*$.
\end{proof}

As remarked earlier, the class $\mathcal U$ of (countable or uncountable) graphs that have only finitely many vertices of infinite degree is contained in~$\clx{\cw}$. And as in the proof of \Prr{ccw} we see that any graph not in $\clx{\mathcal U}$ contains a comb whose leaves all have infinite degree; let us call such a comb a \defi{brush}. Graphs not containing a brush thus lie in~$\clx{\mathcal U}\subseteq \clx{\clx{\cw}}=\clx{\cw}$, and we obtain Theorem~\ref{brush} as a corollary of \Tr{W}:

\begin{corollary}\label{teeth}
Every graph not containing a brush has an \ufp.
\end{corollary}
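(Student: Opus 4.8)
The plan is to deduce Corollary~\ref{teeth} directly from Theorem~\ref{W}, so the entire task reduces to verifying the set-theoretic containment $\clx{\mathcal U}\subseteq\clx{\cw}$ for the class $\mathcal U$ of graphs with only finitely many vertices of infinite degree, and then checking that a brush-free graph lies in $\clx{\mathcal U}$. First I would argue that any graph $G$ not in $\clx{\mathcal U}$ contains a brush. This is a mirror of the converse direction of the proof of Proposition~\ref{ccw}: take a component $C_0$ of $G$ not in $\clx{\mathcal U}$, which must (since rank~$0$ is excluded) contain a vertex $v_0$ of infinite degree such that removing any finite separator leaves a component still outside $\clx{\mathcal U}$; recursively extract vertices $v_0,v_1,\ldots$ of infinite degree lying in nested components $C_0\supseteq C_1\supseteq\cdots$ and join them by finite paths $P_i$ into a comb $\bigcup_{i<\oo}P_i$ whose teeth $v_0,v_1,\ldots$ all have infinite degree, i.e.\ a brush. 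Hence a graph containing no brush must lie in $\clx{\mathcal U}$.

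Next I would establish $\clx{\mathcal U}\subseteq\clx{\cw}$. The key observation already recorded in the excerpt is that $\mathcal U\subseteq\clx{\cw}$: every graph with only finitely many vertices of infinite degree has rank at most~$1$ in $\clx{\cw}$, since after deleting the finite set $\vinf$ all components lie in $\cw$ (they have no vertices of infinite degree, hence $V^*$ empty, hence are in $\cw$ provided countable; and one handles uncountable components as the excerpt notes, using that deleting $\vinf$ certifies membership). From $\mathcal U\subseteq\clx{\cw}$ the containment of closures follows by the idempotence of the closure operation, $\clx{\clx{\cw}}=\clx{\cw}$, which is the identity the excerpt invokes: closing a \klasse\ class and then closing again yields nothing new, because the recursive glueing-along-finite-separators construction of Definition~\ref{def:clu} is transitive in the sense that a graph assembled from pieces of $\clx{\cw}$ is itself in $\clx{\cw}$. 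Thus $\clx{\mathcal U}\subseteq\clx{\clx{\cw}}=\clx{\cw}$.

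Combining the two steps, a graph $G$ containing no brush lies in $\clx{\mathcal U}\subseteq\clx{\cw}=\overline{\cw}$, and Theorem~\ref{W} then supplies an unfriendly partition (indeed shows $G$ is \prfr). That completes the proof.

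The main obstacle I anticipate is the idempotence identity $\clx{\clx{\cw}}=\clx{\cw}$, or equivalently the transitivity of the closure construction: one must check that if $G$ has a finite separator $S$ each of whose $G-S$ components already lies in $\overline{\cw}$ (with ranks computed in the $\cw$-hierarchy), then $G$ itself lies in $\overline{\cw}$ rather than merely in the closure of the closure. This is essentially a bookkeeping argument on ordinal ranks, analogous to Lemma~\ref{S}\eqref{ii}, showing that the ranks add up correctly so that iterating the closure does not escape $\overline{\cw}$; it is routine but is the only place where genuine care with the transfinite construction is required. The two comb-extraction arguments, by contrast, are direct transcriptions of the pattern already carried out in Proposition~\ref{ccw}, differing only in that ``teeth in $V^*$'' is replaced by ``teeth of infinite degree''.
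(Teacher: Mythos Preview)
Your proposal is correct and follows essentially the same route as the paper: show that any graph not in $\clx{\mathcal U}$ contains a brush by mimicking the converse direction of Proposition~\ref{ccw}, use $\mathcal U\subseteq\clx{\cw}$ together with the idempotence $\clx{\clx{\cw}}=\clx{\cw}$ to get $\clx{\mathcal U}\subseteq\clx{\cw}$, and then invoke Theorem~\ref{W}. The paper's argument is exactly this chain, stated in the paragraph immediately preceding the corollary; your identification of the idempotence as the only point requiring care, and of the comb-extraction as a direct transcription of Proposition~\ref{ccw}, matches the paper's treatment.
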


\acknowledgement{We would like to thank Matt DeVos for inspiring discussions on this subject.}

\bibliographystyle{plain}
\bibliography{collective}
\end{document}